\newtheorem{theorem}{Theorem}
\newtheorem{problem}[theorem]{Problem}
\newtheorem{definition}[theorem]{Definition}
\def\bd{\mathbf{d}}
\def\bD{\mathbf{D}}
\def\bs{\mathbf{s}}
\def\J{\mathcal{J}}
\def\DS{\textbf{DS}}
\def\BF{\texttt{\bfseries BASKET FILLING}}
\def\3P{\texttt{\bfseries 3-PARTITION}}
\def\XY{neighborhood degree sum\ }
\begin{document}
\begin{frontmatter}
\title{Not all simple looking degree sequence problems are easy\protect\thanksref{T1}}
\runtitle{{\em NP}-complete degree sequence  problems}
\thankstext{T1}{Both authors were supported partly by National Research, Development and Innovation Office – NKFIH, under the grants K 116769 and SNN 116095.}

\begin{aug}
\author{P\'eter L. Erd\H os \thanksref{cor}\ead[label=e1]{erdos.peter@renyi.mta.hu}
\ead[label=u1,url]{www.renyi.mta.hu/$ \sim $elp/}}
\thankstext{cor}{Corresponding author}
\address{Alfr\'ed R\'enyi Institute of Mathematics, Hungarian Academy of
Sciences, \\
Budapest, Hungary
\printead{e1}\\
\printead{u1}}
\and
\author{Istv\'an Mikl\'os \ead[label=e2]{miklos.istvan@renyi.mta.hu}
\ead[label=u2,url]{www.renyi.mta.hu/$ \sim $miklosi/}}
\address{Alfr\'ed R\'enyi Institute of Mathematics, Hungarian Academy of
Sciences, \\
Budapest, Hungary
\printead{e2}\\
\printead{u2}}
\runauthor{P.L. Erd\H{o}s and I. Mikl\'os}
\end{aug}
\begin{abstract}
Degree sequence (\DS) problems are around for at least hundred twenty years, and with the advent of network science, more and more complicated, structured \DS\ problems were invented. Interestingly enough all those problems so far are computationally easy. It is clear, however, that we will find soon computationally hard \DS\  problems. In this paper we want to find such hard \DS\ problems with relatively simple definition.

For a vertex $v$ in the simple graph $G$ denote $d_i(v)$ the number of vertices at distance exactly $i$ from $v$. Then $d_1(v)$ is the usual degree of vertex $v.$ The vector $\bd^2(G)=( (d_1(v_1), d_2(v_1)), \ldots,$ $(d_1(v_n), d_2(v_n))$ is the {\bf second order degree sequence} of the graph $G$. In this note we show that the problem to decide whether a sequence of natural numbers $((i_1,j_1),\ldots (i_n,j_n))$ is a second order degree sequence of a simple undirected graph $G$ is strongly {\em NP}-complete. Then we will discuss some further {\em NP}-complete \DS\ problems.
\end{abstract}
\begin{keyword}[class=MSC]
\kwd[Primary ]{05C07}
\kwd{60K35}
\kwd[; secondary ]{68R05}
\end{keyword}
\begin{keyword}
{\small degree sequences of simple graphs; second order degree sequences; {\bf basket filling} problem; {\bf \XY}}
\end{keyword}
\received{\smonth{1} \sday{1}, \syear{0000}}

\end{frontmatter}

\section{Introduction}
A network emerged from a complex, real-life problem can be considered known if one can determine its fundamental parameters. One way to ascertain that the chosen parameter set fully determines the important properties of the network is the following: one can randomly generate ensembles of synthetic networks compatible with the parameter set, then  evaluate the similarities and differences among the original network and the generated ones. One important prerequisite for this procedure is to check the feasibility of the actual values of the given parameter set.

For graphs the most simple parameter is the degree sequence. There are myriad results and algorithms dealing with degree sequences: for example Petersen, 1892; Senior, 1950; Tutte, 1954; Gale, 1957; Ryser, 1957; Havel, 1957; Erd\H{o}s and Gallai, 1960; Hakimi, 1962; Fulkerson, 1964; Edmonds, 1965; Berge, 1981; just to name some. Interestingly enough all those problems provide computationally easy algorithms.

Already the first few network applications pointed out fast, that the degree sequences cannot differentiate efficiently among different type of real-life networks: Gene regulatory networks and social networks  with shared degree sequence tend to be scale-free, however, they have very different assortativity properties. In regulatory networks, the large degree vertices are typically connected to small degree vertices, while in social networks, the large degree vertices are connected with large degree vertices. To cope with this phenomena, new graph construction problems have been introduced, the {\em Joint Degree Matrix} (JDM) and the {\em Partition Adjacency Matrix} (PAM) problems. (See papers \cite{pinar,JDM, Mihail} and \cite{skeleton}.) These problems are easy in the same way.

Mahadevan {\em et al.} introduced the $dk$ series of graphs that subsumes all the basic degree-based characteristics of networks of increasing detail (see \cite{dk}). It is defined as a collection of distributions of $G$'s subgraphs of size $d = 0, 1, \ldots, N$, in which nodes are labelled by their degrees in $G$. Namely, the $1k$-distribution is the degree sequence, the $2k$-distribution is the joint degree distribution. Thus, this approach naturally extends the degree sequence and the JDM problems and it is natural to consider the problem to construct a graph with prescribed $1k$, $2k$, $3k$, etc. distribution, and also natural to ask the computational complexity of these problems.

The problem of {\em NP}-completeness for graph construction in general was first suggested to us by Z. Toroczkai (\cite{toro}), which was posed later in \cite{Orsini}. Toroczkai also conjectured that the $dk$ series problem becomes $NP$-complete fast. Intuitively its reason is simple: very fast we have too many constrains for the variables. It is an interesting question: how complicated must  a degree sequence be to be computationally hard.

In this paper we show that the degree sequence construction may get {\em NP}-complete with much less constraints, namely the number of the first and (exactly) second neighbors of every vertex already constitutes such computationally hard problem.

Our proposed problem is also related to a problem on privacy issues of recommendation systems, see \cite{dorka}. Assume that a bipartite graph is given with its adjacency matrix $M$. The rows are the costumers, the columns are the items. While $M$ is unknown for privacy reasons, we know the expressions $M M^T$ and $M^T\! M$. In other words, for any vertex pair in the same vertex class (both for costumers and items) the number of their common neighbors is known (including for any vertex with itself, so the degree of this vertex is also given). The question is whether $M$ can be reconstructed from this data. The number of vertices $w \ne v$ for which  the number of common neighbors is not zero gives the second order degree of $v$. So we have all the data given in the second order degree sequence problem. However here we also have some extra data beyond the first and second order degrees. The complexity of deciding the graphicality of such matrix data is unknown to date.

\section{A brief survey on the complexity of \DS\ problems}
Below we survey briefly what is known about the complexity of degree sequence problems. Let $G$ be a simple, undirected graph, and let $\bd(G)$ be its {\bf degree sequence} denoted as $(d(v_1),\ldots, d(v_n)).$ It is well-known that it can be decided in polynomial time whether a given sequence $\bd'$ is {\bf graphical} with the greedy algorithm of Havel and Hakimi (see \cite{havel, hakimi}). Their algorithm can be easily extended to directed degree sequences and bipartite degree sequences. The Joint Degree Sequence problem can still be solved with a greedy algorithm in polynomial time, see \cite{Mihail}, \cite{pinar}, \cite{JDM} and \cite{Bassler}. It is interesting to mention that Tutte's $f$-factor theorem (\cite{T52, T54}) can be used directly to solve the degree sequence problem (see \cite{JSV}), but not for the Joint Degree Sequence problem.

When more constraints are introduced, there are not known greedy algorithms  to solve those degree sequence problems. However, Tutte's $f$-factor theorem and Edmonds' famous blossom algorithm (\cite{E1965a}) can be applied to solve such degree sequence problems efficiently. For example they can be used to find tripartite realizations of degree sequences with fixed vertex partitions, while no "direct" solution is known for these problems. Another example is graph realizations with a given number of edges crossing a given bipartition (see Erd\H{o}s {\it et. al.} \cite{skeleton}). Here the vertex set $V$ is equipped with a degree sequence $\bd(V)$ and a bipartition $V=U\biguplus W$ is given together with a  natural number $k$. We are looking for a graphical realization of $\bd(V)$ where the number of {\em crossing} edges between $U$ and $W$ is exactly $k.$ There is not known greedy type algorithm to solve this problem, but Edmonds' algorithm and some further considerations provide a polynomial time solution for it.

Generally speaking, the analogous hypergraph degree sequence problems are much harder. For example, there is no known good necessary and sufficient condition for the graphicality of a hypergraph degree sequence. The common thinking says that the majority of these problems should be {\em NP}-hard. Chv\'atal has already found   a {\em NP}-complete problem similar to hypergraph degree sequence questions in 1980 (see \cite{chvatal}): the {\em intersection pattern} of a hypergraph with $N$ hyper-edges is $N^2$ numbers, which give for all edge pairs the cardinality their intersection. He proved that the obviously defined intersection pattern problem is {\em NP}-complete even for 3-uniform hypergraphs.

In case of 3-uniform {\em linear} hypergraphs (no two edges have two points in common) the same problem becomes polynomially solvable (see Jacobson {\it et. al.} \cite{lehel1}). However, for general 3-uniform hypergraphs, if we are given the edge pairs with two points intersection, then  the corresponding decision problem becomes {\em NP}-complete again (see Jacobson {\it et. al.} \cite{lehel2}).

Other known {\em NP}-complete hypergraph degree sequence problems are due to Colburn, Kocay and Stinson (\cite{colburn}): for a $k$-uniform hypergraph $H=(V(H),\mathcal{E}(H))$ and  vertex $u\in V(H)$ consider the $(k-1)$-uniform hypergraph $H_u$ consists of $\{ F\setminus u : u\in F \in \mathcal{E}(H)\}$. We say that $H$ {\bf subsumes} all the $(k-1)$-uniform hypergraph $H_u$ for each $u \in V(H).$ The paper proved that the following two problems are {\em NP}-complete:
\begin{enumerate}[{\rm (1)}]
\item Given $n$ graphs (i.e. 2-hypergraphs) $g_1,\ldots, g_n,$ is there a 3-uniform hypergraph $H$ such that the subsumed graphs $H_i$ are $g_i$?
\item Given the degree sequences of $n$ graphs $g_1,\ldots, g_n,$ is there a 3-uniform hypergraph $H$ whose subsumed graphs $H_i$ have the same degree sequences?
\end{enumerate}

\medskip\noindent
The main purpose of this note is to find "simple" looking but {\em NP}-complete degree sequence type problems. For this end, we are looking modest restriction sets for the classical degree sequence condition.

One particular restriction set was introduced at latest in \cite{alavi} (Alavi {\it et.al.}): let $v$ be a vertex in the graph $G$. The value $d_k(v)$ is defined as the number of vertices at distance $k$ from $v$. Then $d_1(v)$ denotes the usual degree of vertex $v.$ Denote $\bd^2(G)=( (d_1(v_1), d_2(v_1)),\ldots,$ $(d_1(v_n),$ $d_2(v_n))$  or $=(\bd_1(G),\bd_2(G))$ the {\bf second order degree sequence} of the graph $G$.
\begin{definition}\label{def:SOD}
The {\bf second order degree sequence} problem is to decide whether a sequence of pairs of natural numbers $((i_1,j_1),\ldots (i_n,j_n))$ is a second order degree sequence of a simple undirected graph $G$.
\end{definition}

Recently this problem was revived. For example, Araujo-Pardo and her colleagues studied the possible relations among the sizes of $d_1(v)$ and $d_2(v)$ (\cite{pardo}).

Naturally one can study similar problems for second order degree sequences as for the more conservative ones. For example, Saifullina and her colleagues developed several heuristic algorithms to build and sample simple graphs from their second order degree sequences (\cite{Y1,Y2,Y3}).

As it turns out, the heuristic approach here is adequate, since as we will show, the graphicality problem for the second order degree sequences is {\em strongly {\em NP}-complete}.

\medskip\noindent Our main result is the following:

\begin{theorem}\label{th:NP}
The second order degree sequence problem in general is strongly {\em NP}-complete.
\end{theorem}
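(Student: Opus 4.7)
The plan is to reduce from \textbf{3-PARTITION}, which is strongly \emph{NP}-complete, via an intermediate combinatorial problem that the paper's macro list already anticipates (\BF). Membership in \emph{NP} is immediate: given a candidate realisation $G$, one computes every pair $(d_1(v), d_2(v))$ by a single breadth-first search from each vertex and compares to the prescribed sequence, all in polynomial time.

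My first step is to formulate \BF\ as follows: given $m$ baskets with prescribed capacities $c_1,\dots,c_m$ and target sums $s_1,\dots,s_m$, together with $n$ items of positive integer weights $w_1,\dots,w_n$, decide whether the items can be partitioned among the baskets so that basket $k$ receives exactly $c_k$ items whose weights sum to $s_k$. A direct reduction from \textbf{3-PARTITION} (take $c_k=3$ and $s_k=B$ for every $k$, and use the given integers $a_i$ as the weights $w_i$) shows that \BF\ is strongly \emph{NP}-complete, since the weights remain polynomially bounded in the input.

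My second step is to reduce \BF\ to the second order degree sequence problem by constructing a gadget whose realisations biject with basket fillings. For each basket I introduce a \emph{basket vertex} $b_k$ with prescribed pair $(c_k, s_k)$. For each item I introduce an \emph{item vertex} $u_i$ together with a private cloud of $w_i$ degree-one leaves, setting the prescribed degree of $u_i$ to $w_i+1$, so that $u_i$ must connect to its $w_i$ leaves plus exactly one additional vertex. If the construction forces every $u_i$ to attach to a basket (and not to another item or a foreign leaf), then the vertices at distance exactly two from $b_k$ are precisely the disjoint union of the leaf clouds of the items adjacent to $b_k$, so $d_2(b_k) = \sum_{u_i \sim b_k} w_i$. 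The prescription $d_2(b_k)=s_k$ then encodes exactly the target sum, and $d_1(b_k)=c_k$ encodes the basket capacity, giving the required bijection. All vertex counts and prescribed numbers grow polynomially in the input to \textbf{3-PARTITION}, preserving strong \emph{NP}-completeness.

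The main obstacle is engineering the forbidden-adjacency conditions using only first- and second-order degree prescriptions; a priori, item vertices could pair with each other, share leaves, attach to multiple baskets, or attach to none, and any such deviation would corrupt the sum encoded by $d_2(b_k)$. I would address this by introducing auxiliary ``anchor'' vertices whose prescribed pairs $(d_1,d_2)$ are so tightly constrained that they admit essentially a unique local realisation; the anchors absorb every degree quota left over from the intended picture, blocking the unwanted edges. Certifying that the resulting sequence has \emph{no} rogue realisation outside the intended family is the technical core of the argument, and it is where most of the case analysis will live.
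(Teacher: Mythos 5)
Your overall strategy coincides with the paper's: reduce \3P\ to \BF, then encode \BF\ by item vertices carrying private clouds of degree-one leaves and basket vertices whose prescribed pair records the capacity and the target sum. The genuine gap is the one you name yourself and then defer: proving that the constructed sequence admits \emph{no} realisation outside the intended family is the entire content of the theorem, and "auxiliary anchor vertices that absorb leftover degree quota" is not a construction but a placeholder for one. In the paper this rigidity is obtained quite differently, by arithmetic on the prescribed numbers rather than by quota absorption: a single master vertex $\Omega$ with pair $(n+k,M)$ is forced to be adjacent to every weight and basket vertex; an atom's pair $(1,w_i+1)$ forces its unique neighbour to have degree $w_i+2$, which can only be a weight vertex once one guarantees $w_i+1\ne c_j$ and $w_i+1\ne n+k-1$ (conditions (\ref{eq:cond1})--(\ref{eq:cond2}), enforced in general by multiplying all $w_i$ and $s_j$ by $(n+k)\max_\ell c_\ell$); a weight--weight edge would push a weight vertex's second order degree strictly above the prescribed $(k-1)+(n-1)$; and only then does the basket prescription $(c_\ell+1,(k-1)+(n-c_\ell)+s_\ell)$ pin down $\sum w_i=s_\ell$ over its weight neighbours. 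None of these exclusions is supplied, or even sketched concretely, in your proposal.

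There is also a structural problem with the gadget as written: an instance of the second order degree sequence problem must prescribe $(d_1(v),d_2(v))$ for \emph{every} vertex, and you never give the pairs for the item vertices or their leaves. In your intended picture $d_2(u_i)=c_k-1$, where $b_k$ is the basket that item $i$ ends up in, so for a general \BF\ instance with non-uniform capacities the prescription would depend on the unknown solution and cannot be written down in advance. The two ways out are exactly the paper's two reductions: either add the master vertex, which makes every other weight and basket vertex lie at distance two from each weight vertex and hence makes $d_2(W_i)=(k-1)+(n-1)$ assignment-independent; or restrict to the instance coming from \3P\ with all capacities equal to $3$, so that $d_2(u_i)=2$, and then use the numerical window $W/4<\alpha_i<W/2$ (with $W>8$) to rule out atom--atom, atom--basket and weight--weight edges, as in the bipartite version (Theorem \ref{th:bi}). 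Until one of these, or a fully specified and verified anchor construction, is carried out, your reduction is a plan rather than a proof.
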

It is clear that the problem is a member of the class {\em NP}, since one can check in polynomial time whether a graph second order degree sequence is identical with the given double-sequence. In the next section we will demonstrate that it is {\em NP}-complete indeed; we will show some known {\em NP}-complete problems can be reduced to it.

\medskip\noindent The following is a similar problem: denote $D_2(v)$ the sum of the degrees of the neighbors of $v$ in the simple graph $G$. This value is always larger than $d_2(v)$ since, on one hand, $v$ itself occurs in $d(v)$ times, on the other hand the neighbors' neighbors may be overlapping. The problem to decide whether a pair $\bd, \bD_2$ is graphical will be called the {\bf \XY} problem. We will show in Theorem \ref{th:D2} that this problem is strongly {\em NP}-complete.

\section{The \BF\ problem} \label{sec:BF}

First, we are going to construct a new {\em NP}-complete problem, called  \BF, then we will show how to reduce this problem to an instance of second order degree sequence problem.

For that end, assume that we are given $n$ items with $w_1,w_2,\ldots, w_n$ positive integer weights such that the sum of the weights is $M.$ We also have $k$ baskets with capacity $(c_i, s_i)$ with the properties that
\begin{equation}\label{eq:basket}
\sum_{\ell=1}^k c_{\ell}= n \qquad \mbox{and}\qquad \sum_{\ell=1}^k s_{\ell} = M.
\end{equation}
\begin{definition}\label{def:BF}
The \BF\ problem with parameters $(w_1,\ldots, w_n)$ and $(c_1,s_1),\ldots (c_k,s_k)$ is to find an ordered partition $(C_1,\ldots, C_k)$ of the items, such that for $\ell=1, \ldots,k$ we have
\begin{displaymath}
\left \vert C_{\ell} \right \vert = c_{\ell} \quad\mbox{and}\quad \sum_{\mathrm{item}\in C_{\ell}} w(\mathrm{item}) = s_{\ell}.
\end{displaymath}
\end{definition}
\noindent It is clear that we can assume that each $c_\ell>1.$ (If this is not the case then we find a suitable filling for that basket, and forget the basket and its filler.)
\begin{theorem}\label{th:BF}
The {\rm \BF\ } problem is strongly {\em NP}-complete.
\end{theorem}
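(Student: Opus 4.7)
The plan is to reduce from the classical \3P\ problem, which is well known to be strongly \emph{NP}-complete (Garey and Johnson). Recall a \3P\ instance: $3m$ positive integers $a_1,\ldots,a_{3m}$ with $\sum a_i = mB$ and $B/4 < a_i < B/2$, and the task is to partition them into $m$ triples each summing to exactly $B$. The strictness $B/4 < a_i < B/2$ forces any admissible part to have \emph{exactly} three elements. Membership of \BF\ in \emph{NP} is immediate since a partition is a polynomial-size witness that is easy to verify.

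For the reduction, given a \3P\ instance I would construct the following \BF\ instance: take $n := 3m$ items with weights $w_i := a_i$, and $k := m$ identical baskets each with capacity $(c_\ell, s_\ell) := (3, B)$. The normalizations $\sum c_\ell = 3m = n$ and $\sum s_\ell = mB = \sum w_i$ from (\ref{eq:basket}) are satisfied. Any solution $(C_1,\ldots,C_m)$ of the \BF\ instance is, by forgetting the ordering, a partition of $\{a_1,\ldots,a_{3m}\}$ into $m$ triples summing to $B$ each, i.e.\ a solution of the original \3P\ instance. Conversely, any \3P\ partition can be ordered arbitrarily to yield a \BF\ solution. This establishes equivalence of the two decision problems.

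The reduction is obviously computable in polynomial time. For \emph{strong} \emph{NP}-completeness I would note the key point: the numerical values appearing in the \BF\ instance (the weights $w_i = a_i$ and the bounds $s_\ell = B$, $c_\ell = 3$) are all bounded by the largest numerical value of the \3P\ instance. Hence if \BF\ admitted a pseudo-polynomial algorithm, so would \3P, contradicting the strong \emph{NP}-completeness of the latter.

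The proof is mostly routine; the only real thing to check is the observation in the previous paragraph together with the trivial harmless remark recorded after Definition~\ref{def:BF}, that one may harmlessly discard baskets with $c_\ell = 1$ (this is used to justify focusing on instances whose baskets all have $c_\ell > 1$, which is obviously the case here since $c_\ell = 3$). I do not expect a serious obstacle; the main technical care is simply to make sure the reduction preserves the \emph{magnitudes} of the input numbers so that strong, not merely ordinary, \emph{NP}-completeness is obtained.
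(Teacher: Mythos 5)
Your proposal is correct and is essentially the paper's own proof: both reduce \3P\ to \BF\ by making every basket a $(3,B)$ basket, with the student's version merely spelling out the NP-membership, the two-way equivalence, and the preservation of numerical magnitudes that the paper leaves implicit.
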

\begin{proof}
We show that the so called \3P\ problem can be reduced in polynomial time to the \BF\ problem. Let $W$ and $\alpha_1, \ldots \alpha_{3m}$ be positive integers with $W/4 < \alpha_i <W/2$ for each $i.$ Furthermore let $ \sum_i \alpha_i = mW.$ The  \3P\ problem is to decide whether the numbers $\alpha_i$ can be partitioned into $m$ classes  of integers can be partitioned into triples that all have the same sum $B.$ Due to the numerical conditions, in a successful partition all partition classes have exactly 3 elements. This problem is known to be strongly {\em NP}-complete. (Garey and Johnson \cite[Page 96]{GJ})

The polynomial reduction here is obvious: each basket has the parameter set $(3,W).$ A solution of this instance of the \BF\ problem is a solution of the \3P\ problem.
\end{proof}

\section{The proof of Theorem \ref{th:NP}}
\noindent We are going to give two slightly different reduction processes. The general instance of the \BF\ problem will be reduced to a general second order \DS\ problem, while the \3P\ problem will be reduced to a bipartite second order \DS\ problem.

To proceed with the proof of Theorem \ref{th:NP} assume we are given a \BF\ problem with the parameters described in Definition \ref{def:BF}. We are going to construct an instance of the second order degree sequence problem corresponding to our \BF\ problem.

In the graph there are four different types of vertices: $A_i^\ell, W_i, B_i, \Omega$.
\begin{enumerate}[{\rm (i)}]
\item For all $i=1,\ldots,n$ we have $w_i$ labeled {\bf atoms} $A_{\ell}^i$ $(\ell=1,\ldots,w_i)$. Altogether we have $\sum_i w_i$ atoms.
\item We have $n$ labeled {\bf weight} vertices $W_1,\ldots, W_n$.
\item We have $k$ labeled {\bf basket} vertices $B_1,\ldots, B_k.$
\item Finally we have one {\bf master} point $\Omega$.
\end{enumerate}
First we assume that
\begin{equation}\label{eq:cond1}
\forall i=1,\ldots,n; j=1,\ldots ,k \quad\mbox{we have}\quad w_i+1 \ne c_j;
\end{equation}
and
\begin{equation}\label{eq:cond2}
\forall i=1,\ldots,n; \quad\mbox{we have}\quad w_i+1 \ne n+k-1.
\end{equation}
The second order degree sequence $(\bd, \bd_2)$ is defined as follows:
\begin{enumerate}[{\rm (a)}]
\item For any fix $i=1,\ldots,n$ each atom $A_{\ell}^i$ has the second order degree sequence $(1,w_i+1).$
\item For each weight point $W_i$ its second order degree sequence is $(w_i+2, (k-1)+(n-1))$.
\item For each basket vertex $B_i$ its second order degree sequence is $(c_i+1, (k-1)+(n-c_i)+s_i)$.
\item Finally the master point $\Omega$ has second order degree sequence $(n+k, M).$
\end{enumerate}
Now assume that a graph $G$ is a realization of this particular BF-problem. By conditions (\ref{eq:cond1}) and (i) any atom $A_{\ell}^i$ must be adjacent to a weight point of degree $w_i+2.$ Since the weight points with the same degree cannot be distinguished therefore we just showed that $A_{\ell}^i$ is connected to $W_i.$ By condition (b) we have $d(W_i)=w_i+2$ therefore the $d_2(A_{\ell}^c) = (1, w_i+2 -1),$ as it required.

Since we determined the neighbors of all atoms,  the master point must be adjacent to all other points, so to all weight points and all basket vertices. Therefore $\Omega$ has $n+k$ neighbors, and its  second order degree is $\sum w_i = M$, as required.

We state that no two weight points are adjacent. Indeed, assume the opposite: if $W_i$ and $W_j$ are adjacent then $d_2(W_i)=(w_i+2, (n+k-2)+w_j).$ Since $w_j>0,$ it is a contradiction.

There are no two adjacent weight points, therefore each weight point is adjacent with exactly one basket vertex. Since each basket point is adjacent with the master vertex therefore by conditions (c) and (1) there are no two adjacent basket vertices. Finally that means that the adjacency relations between weight points and basket points in the graph $G$ provides a solution of our Basket Filling problem.

The remaining point is to handle the cases when conditions (\ref{eq:cond1}) or (\ref{eq:cond2}) do not hold. As already mentioned we can assume that all $c_i>1.$  Then we increase  the parameters $w_i$ and $s_i$ slightly (that is polynomially) as follows: we multiply all $w_i$ and $s_j$ with the value $(n+k) \max_{\ell} c_{\ell}.$
For this new instance conditions (\ref{eq:cond1}) or (\ref{eq:cond2}) hold automatically and any solution of the new problem provides a solution to the original problem as well. This finishes the proof of Theorem \ref{th:NP}. \hfill $\Box$
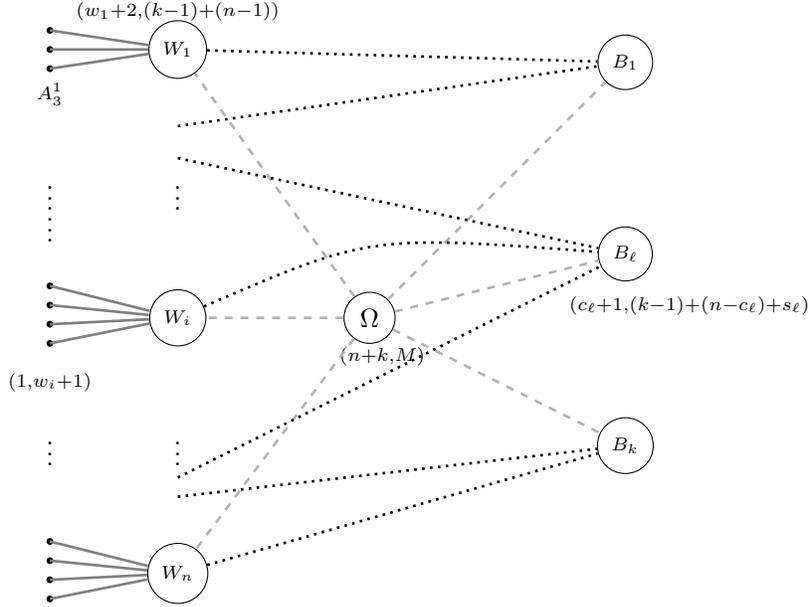
\begin{figure}
\begin{center}
\begin{tikzpicture}[scale=.85]
\fill [color=black] (0,8) circle (1.5pt);
\fill [color=black] (0,7.7) circle (1.5pt);
\fill [color=black] (0,7.4) circle (1.5pt);
\node at (0,7.0) {$\scriptstyle A_3^1$};
\node at (0,5.5) {$\vdots$};
\node at (0,5) {$\vdots$};
\fill [color=black] (0,4) circle (1.5pt);
\fill [color=black] (0,3.7) circle (1.5pt);
\fill [color=black] (0,3.4) circle (1.5pt);
\fill [color=black] (0,3.1) circle (1.5pt);
\node at (0,2.5) {$\scriptstyle (1,w_i+1 )$};
\node at (0,1.5) {$\vdots$};
\fill [color=black] (0,0) circle (1.5pt);
\fill [color=black] (0,-.3) circle (1.5pt);
\fill [color=black] (0,-.6) circle (1.5pt);
\fill [color=black] (0,-.9) circle (1.5pt);
\node (W_1) at (2,7.7)  [circle,draw] {$\scriptstyle W_1$} ;
\node at (2,8.3) {$\scriptstyle (w_1+2, (k-1)+(n-1))$};
\draw [line width=1pt,black!50] (0,8) -- (W_1);
\draw [line width=1pt,black!50] (0,7.7) -- (W_1);
\draw [line width=1pt,black!50] (0,7.4) -- (W_1);
\node at (2,5.5) {$\vdots$};
\node (W_i) at (2,3.5)  [circle,draw] {$\scriptstyle W_i$};
\draw [line width=1pt,black!50] (0,4) -- (W_i);
\draw [line width=1pt,black!50] (0,3.7) -- (W_i);
\draw [line width=1pt,black!50] (0,3.4) -- (W_i);
\draw [line width=1pt,black!50] (0,3.1) -- (W_i);
\node at (2,1.5) {$\vdots$};
\node (W_n) at (2,-.5)  [circle,draw] {$\scriptstyle W_n$};
\draw [line width=1pt,black!50] (0,0) -- (W_n);
\draw [line width=1pt,black!50] (0,-.3) -- (W_n);
\draw [line width=1pt,black!50] (0,-.6) -- (W_n);
\draw [line width=1pt,black!50] (0,-.9) -- (W_n);
\node (O) at (5,3.5) [circle,draw] {$\textstyle \Omega$};
\node at (5.2,2.9) {$\scriptstyle (n+k, M)$};
\draw [line width=1pt,dashed,black!30] (O) -- (W_1);
\draw [line width=1pt,dashed,black!30] (O) -- (W_i);
\draw [line width=1pt,dashed,black!30] (O) -- (W_n);
\node (B_1) at (9,7.5)  [circle,draw] {$\scriptstyle B_1$};
\node (B_l) at (9,4.5)  [circle,draw] {$\scriptstyle B_\ell$};
\node (B_k) at (9,1.5)  [circle,draw] {$\scriptstyle B_k$};
\draw [line width=1pt,dashed,black!30] (O) -- (B_1);
\draw [line width=1pt,dashed,black!30] (O) -- (B_l);
\draw [line width=1pt,dashed,black!30] (O) -- (B_k);
\draw [line width=1pt,dotted] (W_1) -- (B_1);
\draw [line width=1pt,dotted] (2,6.5) -- (B_1);
\draw [line width=1pt,dotted] (W_i) .. controls (5,4.8) .. (B_l);
\draw [line width=1pt,dotted] (2,6) -- (B_l);
\draw [line width=1pt,dotted] (2,1) -- (B_l);
\draw [line width=1pt,dotted] (W_n) -- (B_k);
\draw [line width=1pt,dotted] (2,.7) -- (B_k);
\node at (10,3.7) {$\scriptstyle (c_\ell+1, (k-1)+(n-c_\ell)+s_\ell)$};
\end{tikzpicture}
\caption{Second Order Degree representation of a \BF\ problem}
\end{center}
\end{figure}

\medskip\noindent
Now we present the second reduction process what shows that
\begin{theorem}\label{th:bi}
the second order degree sequence problem on bipartite graphs is also strongly {\em NP}-complete.
\end{theorem}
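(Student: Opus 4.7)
The plan is to adapt the construction of Theorem~\ref{th:NP}, but reduce directly from \3P\ and distribute the four vertex types across a prescribed bipartition. I would place on side $U$ all atoms $A_\ell^i$ (for $\ell=1,\dots,\alpha_i$ and $i=1,\dots,3m$) together with the $m$ basket vertices $B_j$; on side $V$ I would put one weight vertex $W_i$ per $i$ and a single master $\Omega$. The prescribed second-order degrees mirror the previous proof: $(1,\alpha_i)$ for each atom, $(\alpha_i+1,3)$ for $W_i$, $(4,W+m-1)$ for $B_j$, and $(m,3m)$ for $\Omega$. The intended realization has edges $A_\ell^i W_i$, $W_i B_{j(i)}$ (with $j(i)$ the basket of $\alpha_i$ in the 3-partition), and $B_j \Omega$; a routine calculation shows all four sod values match. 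As preprocessing I would scale all $\alpha_i$ and $W$ by a constant $K \ge m$: this preserves equivalence of the \3P\ instance and guarantees $K\alpha_i + 1 \ne m$, so atoms of sod $(1,\alpha)$ can never attach to $\Omega$.

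For the nontrivial reverse direction, suppose $G$ is a bipartite realization on the above bipartition. The unique $V$-neighbor $v$ of an atom $A$ with sod $(1,\alpha)$ satisfies $d(v) = d_2(A) + 1 = \alpha + 1$, and by the padding $v \ne \Omega$, so $v$ is some weight of $\alpha$-value $\alpha$. Consequently no atom hits $\Omega$, all $m$ edges of $\Omega$ land on baskets, and since there are exactly $m$ baskets each basket uses one edge for $\Omega$ and (given its degree $4$) the remaining three for weights. Writing $a_i, b_i$ for $W_i$'s atom- and basket-neighbor counts we have $a_i + b_i = \alpha_i + 1$. Grouping weights by common value $\alpha$ and using that all $A(\alpha)\cdot\alpha$ atoms of value $\alpha$ land on the $A(\alpha)$ weights of that value (with $A(\alpha) = |\{j:\alpha_j = \alpha\}|$), a one-line edge count gives $\sum_{\alpha_j=\alpha} b_j = A(\alpha)$. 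On the other hand $b_i = 0$ would force $d_2(W_i) = 0$, contradicting the prescribed value $3$; hence $b_i \ge 1$ for every $i$. The pointwise bounds saturate the aggregate sum, so $b_i = 1$ uniformly: each weight lies in exactly one basket and each basket contains three weights. Expanding $d_2(B_j)$ under this (now forced) structure yields $\sum_{W_i \in B_j}\alpha_i + (m-1)$, and equating with the prescribed $W+m-1$ produces $\sum_{W_i \in B_j}\alpha_i = W$, i.e.\ a valid \3P\ solution.

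The hardest step will be forcing $b_i = 1$ uniformly: when several $\alpha_i$ coincide, atoms of that value are interchangeable among matching weights and no local argument pins down a specific weight. The clean resolution is the aggregate identity $\sum_{\alpha_j = \alpha} b_j = A(\alpha)$ combined with the pointwise positivity $b_i \ge 1$ --- together they collapse to uniform equality. The remaining pieces --- the bipartite assignment of vertex types, the sod computations, and the polynomial padding that separates the master and weight degrees --- are essentially direct transcriptions of the argument for Theorem~\ref{th:NP}, and preserve the polynomial bounds required for \emph{strong} \textit{NP}-completeness.
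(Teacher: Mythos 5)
Your reduction is correct and does prove the theorem (under the natural formulation in which the bipartition of the vertex set is part of the input): the degree-separation padding, the forced atom--weight attachments, the counting identity $\sum_{\alpha_j=\alpha} b_j = A(\alpha)$ combined with $b_j\ge 1$, and the final expansion of $d_2(B_j)$ are all sound, and the blow-up is polynomial in the unary size, so strong \emph{NP}-completeness is preserved. Your route differs from the paper's in one structural point: the paper exploits the fact that in the 3-partition instance all baskets have the identical capacity $(3,W)$ to drop the master vertex altogether, prescribing only $(1,\alpha_i)$ for atoms, $(\alpha_i+1,2)$ for weights and $(3,W)$ for baskets, with bipartition $\{\text{atoms, baskets}\}$ versus $\{\text{weights}\}$; you instead transplant the master-vertex gadget of Theorem~\ref{th:NP} into the bipartite setting, which makes the gadget slightly bulkier but your reverse direction correspondingly more explicit --- the paper's verification is only a three-line sketch, and your $b_i=1$ counting argument is essentially what is needed to complete it. One caveat: because you argue inside a prescribed bipartition, unwanted adjacencies such as weight--weight, weight--$\Omega$ and basket--basket are excluded for free, whereas the paper's master-free construction excludes them from the second-order degree values alone, so every realization is automatically bipartite and no side assignment needs to be specified in the instance. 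If the theorem is read as ``does some bipartite graph realize the given sequence'' with no sides given, your argument needs a small supplement (ruling out weight--$\Omega$ and weight--weight edges by further degree separation), which your padding trick can readily supply.
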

\begin{proof}
We will show that any \3P\ problem can be polynomially reduced to a second order degree sequence problem on bipartite graphs. So we are given a multiset $P$ of $n = 3 m$ positive integers $\alpha_i$ with $W/4 < \alpha_i < W/2,$ and we want to decide whether $P$ can be partitioned into $m$ triplets $P_1, P_2, \ldots, P_m$ such that the sum of the numbers in each subset is equal to $W$. In a successful solution each partition class consists of three numbers. We will assume that $W > 8.$

The construction of the bipartite second order degree sequence problem is very similar to the previous one. But we have a good use of the fact, that all baskets have the same weight capacity ($B = W$), and the same $c_{\ell}=3$ capacity, so we can get rid of the master point:

We will represent our partition problem with a bipartite graph what consists of $mW$ {\bf atoms}, $3m$ {\bf weight} points finally $m$ {\bf basket} points. In the graphical representation of a solution of the \3P\ problem each weight point will be connected to the necessary number of atoms and connected to exactly one basket point. Finally each basket point is adjacent with three weight points. By these conditions, the graph is automatically a bipartite one: one class contains all the atoms and basket points, while the other one consists of all the weight points.

The second order degree sequence is as follows:
\begin{enumerate}[{\rm (a)}]
\item Each atom belonging to weight point representing $\alpha_i$  has second order degree sequence $(1, \alpha_i).$ (The first neighbor is the weight point, the second neighbors are the other atoms, connected to the weight point, and one basket point.)
\item The weight point representing the number $\alpha_i$ has second order degree $(\alpha_i+1, 2).$ (The first neighbors are the atoms and one basket point, the second neighbors are the other two weight points connected to the basket point.)
\item A basket point has second order degree sequence $(3, W).$ (The neighbors are weight points, and the second neighbors are atoms.)
\end{enumerate}
Now assume that one can find a solution to the defined second order degree sequence problem. Then:
\begin{enumerate}[{\rm (1)}]
\item No atom can be connected to another atom (otherwise the second order degree is 0). No atom can be connected to a basket point (otherwise the second order degree is $2 < W/4$).
\item No weight point can be connected to another weight point (otherwise the second order degree is $\ge W/2-1 > 2$).
\item All basket points must be used against weight points.
\end{enumerate}
\end{proof}
\section{The relaxations of the Joint Degree Matrix problem}\label{sec:JDM}
In  recent years there has been a large (and growing) interest in real-life social and biological networks. One important distinction between these two network types lies in their overall structure: the first type typically have a few very high degree vertices and many low degree vertices with high {\em assortativity} (where a vertex is likely to be adjacent to vertices of similar degree), while the second kind is generally {\em disassortative} (in which low degree vertices tend to attach to those of high degree). It is well known, the {\em degree sequence} alone cannot capture these differences. There are several approaches to address this problem. One way to ease this problem is the JDM model (\cite{pinar,JDM,Mihail}):

Let $G=(V,E)$ be an $n$-vertex graph with {\em degree sequence} $\bd(G)=(d(v_1), \ldots,$ $d(v_n))$. We denote the maximum degree by $\Delta$, and for $1\le i\le\Delta$, the set of all vertices of degree $i$ is $V_i$. The {\em degree spectrum} $\bs_G(v)$ is a vector with $\Delta$ components, where $\bs_G(v)_i$ gives the number of vertices of degree $i$ adjacent to $v$ in the graph $G$. (This notion was originally introduced in \cite{JDM}.) While in {\em graphical realizations} of a degree sequence $\bd$ the degree of any particular vertex $v$ is prescribed, its degree spectrum may vary.

\begin{definition}\label{def}
The {\em joint degree matrix} {\rm (JDM)} $\J(G)=[\J_{ij}]$ of the graph $G$ is a $\Delta\times \Delta$ matrix where $\J_{ij}=\left |\{xy\in E(G):\,x\in V_i,y\in V_j\} \right |$. If, for a $k \times k$ matrix $M$ there exists a graph $G$ such that $\J(G)=M$, then $M$ is called a {\em graphical JDM}.
\end{definition}
There is an easy graphicality condition for the JDM model:
\begin{theorem}[Erd\H{o}s-Gallai type theorem for JDM, \cite{pinar}] \label{th:EG-JDM}
A $k \times k$ matrix $\J$ is a graphical JDM if and only if the followings hold:
\begin{enumerate}[{\rm (i)}]
\item for all $i:\; $ $n_i:=\frac{1}{i} \left (\J_{ii}+\sum\limits_{j=1}^{k} \J_{ij} \right )$ \ \ is an integer (this is actually $=|V_i|$);
\item for all $i:\; $ $\J_{ii}\le {n_i\choose 2}$;
\item for all $i\ne j:\; $ $\J_{ij}\le n_i n_j. \hfill \Box$
\end{enumerate}
\end{theorem}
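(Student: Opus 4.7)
The plan is: necessity by double-counting edge-endpoints inside each class $V_i$; sufficiency by a two-stage construction that first assigns each vertex a ``spectrum'' $(x_1(v),\dots,x_k(v))$ recording its intended number of neighbors in each class, and then folds these spectra into concrete bipartite and intra-class graphs via Gale--Ryser and Erd\H{o}s--Gallai.

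For necessity, observe that in any realization $G$ the $n_i=|V_i|$ vertices of $V_i$ contribute $i\cdot n_i$ edge-endpoints lying in $V_i$, which must also equal $2\J_{ii}+\sum_{j\ne i}\J_{ij}=\J_{ii}+\sum_j\J_{ij}$; this is exactly condition (i). Conditions (ii) and (iii) are just the trivial caps on edges inside $V_i$, respectively between $V_i$ and $V_j$.

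For sufficiency, fix disjoint vertex sets $V_1,\dots,V_k$ with $|V_i|=n_i$ and build $G$ in two stages. In Stage~1 we would find, for every $i$, non-negative integers $x_j(v)$ (indexed by $v\in V_i$ and $1\le j\le k$) satisfying the row sums $\sum_j x_j(v)=i$, the column sums $\sum_{v\in V_i}x_j(v)=\J_{ij}$ for $j\ne i$ and $=2\J_{ii}$ for $j=i$, and the entry caps $x_j(v)\le n_j$ (tightened to $n_i-1$ when $j=i$). This is a bounded integer transportation instance; its row and column totals agree by (i), and the per-column capacity is exactly $n_in_j\ge\J_{ij}$ by (iii) (or $n_i(n_i-1)\ge 2\J_{ii}$ by (ii)), so Hoffman's feasibility theorem supplies an integer solution. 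A local refinement — unit swaps $x_j(v)\mapsto x_j(v)-1$, $x_j(w)\mapsto x_j(w)+1$ within a single column, strictly decreasing $\sum_v\binom{x_j(v)}{2}$ — flattens each column to at most two consecutive values, and a column so balanced automatically passes the Erd\H{o}s--Gallai test on $n_i$ vertices. In Stage~2 we realize each bipartite graph $B_{ij}$ on $V_i\cup V_j$ ($i<j$) from the degree lists $(x_j(v))_v$ and $(x_i(u))_u$ via Gale--Ryser, and each intra-class graph $H_i$ on $V_i$ from $(x_i(v))_v$ via Erd\H{o}s--Gallai. The union $G=\bigcup_{i<j}B_{ij}\cup\bigcup_i H_i$ then has degree $\sum_j x_j(v)=i$ at every $v\in V_i$ and class-pair edge counts equal to $\J_{ij}$.

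The main obstacle will be the refinement step in Stage~1. Bare transportation feasibility is classical, but to flatten each column by single swaps one must couple a swap in column $j$ with a compensating swap in another column, so that both affected vertices retain row sum $i$ — essentially an alternating-cycle argument in the transportation tableau, constrained by the cell caps. Once this coupling is secured, the Erd\H{o}s--Gallai and Gale--Ryser realizations in Stage~2 are routine.
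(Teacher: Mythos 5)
The paper itself gives no proof of Theorem \ref{th:EG-JDM}: it is quoted from Stanton and Pinar \cite{pinar} (hence the $\Box$ inside the statement), so your argument can only be measured against the published proofs, which construct a \emph{balanced} realization in which every vertex of $V_i$ has $\lfloor \J_{ij}/n_i\rfloor$ or $\lceil \J_{ij}/n_i\rceil$ neighbours in $V_j$. Your plan is essentially that proof: the necessity count is the standard one, and Stage~2 is indeed routine once the columns are flat, since $\lceil \J_{ij}/n_i\rceil\le n_j$ (by (iii)), $\lceil 2\J_{ii}/n_i\rceil\le n_i-1$ (by (ii)) and the even diagonal total $2\J_{ii}$ make near-regular sequences pass Gale--Ryser and Erd\H{o}s--Gallai (tacitly one also uses the symmetry $\J_{ij}=\J_{ji}$ so the two sides of each bipartite problem have equal sums). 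Two remarks on Stage~1. First, your appeal to Hoffman is legitimate but under-justified as written; it does work because the caps are constant along each column and all row sums in the block $V_i$ equal $i$, so the Gale--Hoffman subset conditions reduce exactly to $\sum_{j\in B}c_j\le n_i\min\bigl(i,\sum_{j\in B}u_j\bigr)$, which follows from the two inequalities you cite. Second, the flattening step you flag as the main obstacle does go through: minimize $\sum_{v,j}x_j(v)^2$ over the nonempty set of integer solutions; if some column $j$ had $x_j(v)\ge x_j(w)+2$, then equality of the row sums of $v$ and $w$ forces a column $j'$ with $x_{j'}(w)>x_{j'}(v)$, and the four-cell swap $x_j(v)\mapsto x_j(v)-1$, $x_j(w)\mapsto x_j(w)+1$, $x_{j'}(w)\mapsto x_{j'}(w)-1$, $x_{j'}(v)\mapsto x_{j'}(v)+1$ respects all caps and strictly decreases the potential. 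Simpler still, you can skip both Hoffman and the refinement: impose the box constraints $\lfloor c_j/n_i\rfloor\le x_j(v)\le\lceil c_j/n_i\rceil$ (with $c_j=\J_{ij}$ for $j\ne i$ and $c_i=2\J_{ii}$) from the outset; the uniform fractional point $x_j(v)=c_j/n_i$ is feasible, and integrality of the capacitated transportation polytope yields an integer point that is balanced by construction. With either completion your proposal is a correct proof, and it coincides in substance with the known one.
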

\smallskip\noindent
Another way to measure the distance from "good" assortativity can be the following formulation (actually, similar parameters were suggested earlier). In the simple graph $G$ denote $D_2(v)$ the {\bf \XY}of vertex $v:$
\begin{equation}\label{eq:eq}
D_2(v) = \sum _{u \in \Gamma(v)} d(u).
\end{equation}
This is of course greater than $d_2(v)$ since $v$ is counted $d(v)$ times in it, furthermore there may be a lot of "overlapping" second neighbors. Now a graph should have high assortativity, if $D_2(v)$ is roughly $d(v)^2.$ Possible questions:

\smallskip
\begin{enumerate}[{\rm (A)}]
\item It is given $|V|$ and $D_2(v)$ but $\bd(G)$ is unknown. Is it graphical?
\item It is given $|V|$ and for all $i$ the values $\frac{\sum_{v\in V_i}D_2(v)}{i|V_i|}$ are known. Is it graphical?
\item The sequences $\bd(G), \bD_2(G)$ are given. Is it graphical?
\item The JDM matrix $\J$ and $\bD_2(G)$ are given. Is it graphical?
\item We know $\bd(G)$ and for each $i$ we know $\bD(i)=\sum_{v \in V_i} \bD_2(v).$ Is it graphical?
\end{enumerate}
\smallskip \noindent
Next we prove that problem (C) is {\em NP}-complete:
\begin{theorem}\label{th:D2}
The \XY problem defined by the pair $\bd(G),$ $\bD_2(G)$ is {\em NP}-complete.
\end{theorem}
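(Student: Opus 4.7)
The plan is to reduce \3P\ (not the more general \BF) to the \XY\ problem, along the same lines as the bipartite construction in the proof of Theorem~\ref{th:bi}, but with the second-order degree $d_2(v)$ replaced everywhere by the neighborhood degree sum $D_2(v)=\sum_{u\in\Gamma(v)}d(u)$. Given a \3P\ instance $\alpha_1,\ldots,\alpha_{3m}$ with $\sum\alpha_i=mW$ and $W/4<\alpha_i<W/2$, I first scale (multiply every $\alpha_i$ and $W$ by a fixed constant, say $12$) so that $W\ge 12$ and hence $\alpha_i\ge 4$ for all $i$; this ensures the three intended vertex-degree values $\{1,\,3,\,\alpha_i+1\}$ are pairwise distinct and no value equals $2$. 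The prescribed $(\bd,\bD_2)$-sequence then consists of $mW$ \emph{atoms} with $(1,\alpha_i+1)$ (with exactly $\alpha_i$ atoms ``belonging'' to index $i$), $3m$ \emph{weight points} $W_i$ with $(\alpha_i+1,\,\alpha_i+3)$, and $m$ \emph{basket points} with $(3,\,W+3)$.

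The forward direction is routine: a solution $P_1,\ldots,P_m$ of the \3P\ instance yields a graph by joining the $\alpha_i$ atoms of index $i$ to $W_i$, joining each $W_i$ to the unique basket point corresponding to the triple containing $\alpha_i$, and nothing else. Checking vertex by vertex shows that this graph realizes the prescribed $(\bd,\bD_2)$.

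The essential work is the backward direction: any graph $G$ realizing $(\bd,\bD_2)$ must encode a \3P. The argument proceeds in four short steps. First, a degree-$1$ vertex $v$ satisfies $D_2(v)=d(\text{neighbor of }v)$, so since every atom has $D_2\ge\alpha_i+1>3$, and atom/basket neighbors would force $D_2=1$ or $D_2=3$ respectively, every atom is adjacent to some weight point. Second, no two weight points can be adjacent: if $W_i$ and $W_j$ were adjacent then $D_2(W_i)\ge(\alpha_j+1)+\alpha_i\cdot 1=\alpha_i+\alpha_j+1$, forcing $\alpha_j\le 2$, contradicting $\alpha_j> W/4\ge 3$. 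Third, writing $r_i$ and $b_i$ for the numbers of atom and basket neighbors of $W_i$, the linear system $r_i+b_i=\alpha_i+1$, $r_i+3b_i=\alpha_i+3$ forces $b_i=1$ and $r_i=\alpha_i$. Fourth, the resulting $3m$ weight-to-basket edges already saturate all basket degrees, so every basket has exactly three weight-point neighbors, and the constraint $D_2(B)=W+3$ then says precisely that the three $\alpha$-values of those neighbors sum to $W$---which is a \3P.

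The main obstacle is not any individual step but ensuring all the tiny numerical side-conditions (notably $\alpha_i\ge 3$, the absence of a degree-$2$ entry in $\bd$, and the disjointness of the three degree classes) hold, so that a realization's structure is pinned down up to permutations of like-type vertices. These are handled uniformly by the initial scaling, in the same spirit as conditions (\ref{eq:cond1})--(\ref{eq:cond2}) in the proof of Theorem~\ref{th:NP}. Membership in \emph{NP} is immediate, since given $G$ one can compute $(\bd(G),\bD_2(G))$ in polynomial time; combined with the strong \emph{NP}-completeness of \3P, this proves Theorem~\ref{th:D2}.
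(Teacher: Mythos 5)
Your proposal is correct and follows essentially the same reduction as the paper: the identical gadget (atoms with $(1,\alpha_i+1)$, weight points with $(\alpha_i+1,\alpha_i+3)$, baskets with $(3,W+3)$) applied to a \3P\ instance, with the same structural argument that atoms attach to weight points, weight points are pairwise non-adjacent, each weight point gets exactly one basket neighbor, and the basket constraint forces triples summing to $W$. Your explicit scaling to separate the degree classes and the little linear system for $r_i,b_i$ just spell out details the paper leaves implicit (it relies on $W>8$ from the bipartite construction), so there is no substantive difference.
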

\begin{proof}
We will describe an instance of the \3P\ problem with a similar graph gadget what was used in the proof of Theorem \ref{th:bi}: We have $mW$ atoms, $3m$ weight points and $m$ basket vertices, where each weight point has weight satisfying $W/4 < w < W/2$.

For the atoms: $d(A_i^\ell)=1$ and $D_2(A_i^\ell)=w_i+1.$ (There is only one neighbor, and it has degree $w_i+1.$) Then for the weight point $W_i$ we have: $d(W_i)=w_i+1$ (the extra degree is the basket vertex) and $D_2(W_i)=w_i+3.$ (Each atom next to $W_i$ has degree 1, and the neighboring basket vertex has three neighbors.) Finally for the basket vertex $B$ we have $d(B)=3$ and $D_2(B)=W+3$ since each weight point also incident with the basket vertex itself.

Assume that $G$ is a realization of this \XY problem. Then the atom $A_i^\ell$ must be adjacent to a weight point of weight $w_i.$ Then each weight point has just one free degree. Two weight points cannot be connected because then its $D_2(W_i)$ would be much greater then $w_i+3.$ So each basket vertex must be connected to three weight points.
\end{proof}

\smallskip\noindent We believe that problem (E) is also {\em NP}-complete. For that end we can set up the following integer feasibility region and we think that this special integer feasibility problem is {\em NP}-complete. More precisely:
\begin{problem}\label{imre}
Assume that we are given $|V_i|$ and $\bD(i)=\sum_{v \in V_i} \bD_2(v)$ for $i=1,\ldots, \Delta.$  We want to find a simple graph with these parameters.
\end{problem}
The problem is equivalent with finding a solution for the following integer feasibility problem.

For degree $i$ we have to find $ i |V_i|$ integers with $0\le j \le \Delta$  with sum $\bD(i)$ such that the defined Joint Degree Matrix is graphical. So we are looking for the unknown matrix $\J=[J_{i,j}]$ with dimensions $\Delta \times \Delta$ such that:
\begin{eqnarray}
&&\sum\limits_{j=1}^\Delta  j J_{i,j} + i J_{i,i,}  = \bD(i) \nonumber \\
&&\sum\limits_{j=1}^\Delta  J_{i,j} = i|V_i|  \nonumber \\
&& J_{i,j} = J_{j,i} \nonumber \\
&& 0  \le J_{i,j} \le |V_i| |V_j|\nonumber
\end{eqnarray}
The first two equalities describe the partition problem's constrain, the third one shows the symmetry of the JDM, while the last one ensures that the given JDM is graphical.

\end{document}